\documentclass[12pt,a4paper]{article}
\usepackage{graphicx} 
\usepackage{amsmath,amsthm,amsfonts,amssymb} 
\usepackage{txfonts,eucal,bm} 
\usepackage{color} 

\newtheorem{thm}{Theorem}[section]
\newtheorem{prop}[thm]{Proposition}

\newtheorem{lem}[thm]{Lemma}
\theoremstyle{definition}
\newtheorem{defn}[thm]{Definition}
\theoremstyle{remark}
\newtheorem{rem}[thm]{Remark}
\newtheorem{exa}[thm]{Example}
%
%
\numberwithin{equation}{section}

\DeclareMathOperator\adj{adj}%
\DeclareMathOperator\ann{ann}%

\DeclareMathOperator\Char{Char}%
%

%
%

%

%

%
%
\DeclareMathOperator\GL{GL}%

\DeclareMathOperator\GO{GO}%

\DeclareMathOperator\im{im}%

\DeclareMathOperator\Orth{O}%

\DeclareMathOperator\path{path}
\DeclareMathOperator\spn{span}%
%
%


\newcommand{\x}{\times} 
\newcommand{\Trans}{\mathrm T}
\newcommand{\trenn}{{-\hspace{0pt}}}







\newcommand{\li}{\langle} 
\newcommand{\re}{\rangle} 
\newcommand{\lire}{\li\,\cdot\,,\cdot\,\re} 




\newcommand{\Oweak}{\Orth'} 


\newcommand{\hatS}{\hat{\vS}}
\newcommand{\hatR}{\hat{\vR}}
\newcommand{\hatT}{\hat{\vT}}
\newcommand{\hatB}{\hat{B}}
\newcommand{\hatD}{\hat{D}}
\newcommand{\hatG}{\hat{G}}
\newcommand{\hatN}{\hat{N}}
\newcommand{\hatQ}{\hat{Q}}
\newcommand{\hatg}{\hat{g}}

\newcommand{\relB}{\mathrel{\lozenge}} 
\newcommand{\relhatB}{\mathrel{\hat{\lozenge}}} 


\renewcommand{\phi}{\varphi}
\renewcommand{\theta}{\vartheta}
\renewcommand{\kappa}{\varkappa}


\newcommand{\bF}{{\mathbb F}}

\newcommand{\bP}{{\mathbb P}}

\newcommand{\bR}{{\mathbb R}}


\newcommand{\vM}{{\bm M}}

\newcommand{\vR}{{\bm R}}
\newcommand{\vS}{{\bm S}}
\newcommand{\vT}{{\bm T}}

\newcommand{\vV}{{\bm V}}
\newcommand{\vW}{{\bm W}}

\newcommand{\va}{{\bm a}}
\newcommand{\vb}{{\bm b}}
\newcommand{\vc}{{\bm c}}

\newcommand{\ve}{{\bm e}}
\newcommand{\vf}{{\bm f}}

\newcommand{\vm}{{\bm m}}

\newcommand{\vo}{{\bm o}}

\newcommand{\vs}{{\bm s}}

\newcommand{\vu}{{\bm u}}

\newcommand{\vx}{{\bm x}}
\newcommand{\vy}{{\bm y}}


\begin{document}
\sloppy
\author{Hans Havlicek\thanks{https://orcid.org/0000-0001-6847-1544}}
\title{Quadratic forms and their duals}
\date{June 30th, 2025}
\maketitle
\begin{center}
\textit{Dedicated to Friedrich Manhart on the occasion of his 70th birthday}
\end{center}

\begin{abstract}\noindent%
There are many specific results, spread over the literature, regarding the
dualisation of quadrics in projective spaces and quadratic forms on vector
spaces. In the present work we aim at generalising and unifying some of these.
We start with a quadratic form $Q$ that is defined on a subspace $\vS$ of a
finite-dimensional vector space $\vV$ over a field $\bF$. Whenever $Q$
satisfies a certain condition, which comes into effect only when $\bF$ is of
characteristic two, $Q$ gives rise to a \emph{dual quadratic form} $\hatQ$. The
domain of the latter is a particular subspace $\hatS$ of the dual vector space
of $\vV$. The connection between $Q$ and $\hatQ$ is given by a binary relation
between vectors of $\vS$ and linear forms belonging to $\hatS$.
\par\noindent
\textbf{Mathematics Subject Classification (2020):} 15A63  \\
\textbf{Key words:} quadratic form, dual vector space, subspace
\end{abstract}

\section{Introduction}\label{se:intro}
Each finite-dimensional vector space $\vV$ over a field $\bF$ comes along with
its dual space $\vV^*$ and, up to a canonical identification, the dual of
$\vV^*$ is again $\vV$. Beyond that, there are many instances where some
``feature'' defined on $\vV$ determines in a natural way an analogous ``dual
feature'' on $\vV^*$ and, moreover, the initial feature coincides with its
second dual. For example, each basis of $\vV$ gives rise to a unique basis of
$\vV^*$, which is known as its dual basis. In this note we address the problem
whether or not an analogous situation occurs when dealing with quadratic forms.
\par
There are several results that contribute to the above problem. Most of them
are given in terms of quadrics of a finite-dimensional projective space $\bP$
over $\bF$ and its dual space $\bP^*$. Thereby homogenous coordinates are used
to describe not only the points of $\bP$ but also the hyperplanes of $\bP$,
which are identified with the points of $\bP^*$. A quadric locus (resp.\
quadric envelope) is specified as the set of those points of $\bP$ (resp.\
hyperplanes of $\bP$) the homogeneous coordinates of which belong to the zero
set of a chosen quadratic form. A very general construction is given by Hodge
and Pedoe \cite[pp.~213--214]{hodge+p-94b} under the assumption that $\bF$ is
an algebraically closed field of characteristic zero: It is shown that each
quadric locus of a subspace of $\bP$ gives rise to a quadric envelope of a
certain subspace of $\bP^*$. Loosely speaking, the latter quadric envelope
comprises those hyperplanes of $\bP$ that are ``tangent'' to the quadric locus.
The initial quadric locus can be reproduced, \emph{mutatis mutandis}, from its
corresponding quadric envelope. Constructions of the same kind (under varying
assumptions on the initial quadric locus and the ground field) can be found in
many books, for example, \cite[\S~46.3]{burau-61a}, \cite[p.~98
Satz~2]{gier-82a}, \cite[pp.~129--134]{gier-82a},
\cite[Thm.~4.2.3]{odeh+s+g-20a}, \cite[Thm.~4.2.10]{odeh+s+g-20a} and
\cite[pp.~269--270]{semp+k-98a}. We refer also to \cite[p.~155]{stru+s-10a},
where a lattice-theoretical approach is used to describe the dual of a
projective-metric space. Furthermore, the so-called homogeneous model of an
affine-metric space follows these lines. See \cite{bamb+s-25a},
\cite{gunn-17b}, \cite{havl-23a}, \cite{seli-22a} and the many references given
there.
\par
In view of the above, we present a self-contained approach that provides a
one-to-one correspondence between certain quadratic forms defined on subspaces
of $\vV$ and certain quadratic forms defined on subspaces of $\vV^*$. There
will be no restriction whatsoever on the ground field $\bF$. This
correspondence incorporates \emph{all} quadratic forms on subspaces of $\vV$
resp.\ $\vV^*$ provided that $\bF$ is not of characteristic two.
\par
The paper is organised as a kind of guided tour. In Section~\ref{se:pre}, we
collect basic results. Next, in Section~\ref{se:Q-sub}, we exhibit specific
features (in $\vV$ and $\vV^*$) arising from a quadratic form $Q$ that is
defined on a subspace $\vS$ of $\vV$. Section~\ref{se:Q-dach} starts by
requiring a condition to be satisfied by $Q$. This condition holds in any case
when the characteristic of $\bF$ is unequal two. Then the \emph{dual quadratic
form} of $Q$, which is written as $\hatQ$, is defined on a particular subspace
$\hatS$ of $\vV^*$ in a coordinate-free way. As one might expect, $\hatQ$
admits a dual quadratic form, too, which coincides with $Q$. It is worth noting
that the subspace $\hatS$ of $\vV^*$ depends only on the radical $\vR$ of the
polar form of $Q$ and not on the subspace $\vS$. The latter determines the
radical $\hatR$ of the polar form of $\hatQ$. In particular, $\hatR$ is the
zero subspace of $\vV^*$ precisely when $\vS=\vV$. This is why we consider from
the very beginning quadratic forms on subspaces of $\vV$ rather than quadratic
forms defined on all of $\vV$. Still, coordinates must not be avoided.
Therefore, Section~\ref{se:koo} contains the transition from $Q$ to its dual
quadratic form $\hatQ$ in terms of coordinates. Also, some references to
closely related work are given there. The ties between $Q$ and $\hatQ$ are
fostered in Section~\ref{se:sim} by discussing a relationship between the
similarities of $(\vS,Q)$ and $(\hatS,\hatQ)$.

\section{Preliminaries}\label{se:pre}
In this section, we establish notation and collect basic results. For proofs
and further details we refer, among others, to \cite[Ch.~II
\S~2.3--2.7]{bour-89a}, \cite[Kap.~4]{havl-22b}, \cite[Ch.~3]{roman-08a},
\cite[3.7]{shaf+r-13a}, \cite[p.~18]{tayl-92a} and \cite[pp.~50--51]{tayl-92a}.
\par
Hereafter, $\bF$ denotes a (commutative) field. The characteristic of $\bF$,
which is abbreviated as $\Char \bF$, is arbitrary unless explicitly stated
otherwise. A vector space always means a finite-dimensional vector space over
$\bF$.
\par
Given a vector space $\vV$, say, we write $\vV^*$ for the \emph{dual space} of
$\vV$, that is the vector space formed by all linear mappings $\vV\to \bF$. The
elements of $\vV^*$ will be addressed as \emph{linear forms}. The
\emph{canonical pairing} $\lire_{\vV}\colon \vV^* \x \vV\to \bF$ sends any pair
$(\va^*,\vx)\in\vV^*\x\vV$ to the scalar $\li\va^*,\vx\re_{\vV}:=\va^*(\vx)$.
Since the dimension of $\vV$ is finite, we may consider $\vV$ as the dual space
of $\vV^*$ by identifying each vector $\vu\in \vV$ with the mapping
$\li\,\cdot\,,\vu\re_{\vV}\in (\vV^*)^*$ sending $\va^*\mapsto
\li\va^*,\vu\re_{\vV}$ for all $\va^*\in\vV^*$.
\par
If $\vM$ is a subset of $\vV$, then the \emph{annihilator} of $\vM$ with
respect to $\vV$ is given as
\begin{equation}\label{eq:ann}
    \ann_{\vV}(\vM) :=\{\va^*\in \vV^*\mid \li \va^*,\vm\re_{\vV} = 0
    \mbox{~for all~}
    \vm\in\vM\} .
\end{equation}
We note that $\ann_{\vV}(\vM)$ is a subspace of $\vV^*$. Likewise, the
annihilator with respect to $\vV^*$ of any subset of $\vV^*$ is a subspace of
$\vV$. If $\vT$ is a subspace of $\vV$, in symbols $\vT\leq\vV$, then the
dimensions of $\vT$ and $\ann_{\vV}(\vT)$ satisfy $\dim\vT +
\dim\bigl(\ann_{\vV}(\vT)\bigr)=\dim\vV$. By going over to the annihilator of
$\ann_{\vV}(\vT)$ with respect to $\vV^*$, we obtain
$\ann_{\vV^*}\bigl(\ann_{\vV}(\vT)\bigr)=\vT$.
\par
Let $\dim\vV=:n$ and let $\{\ve_1,\ve_2,\ldots,\ve_n\}$ be a basis of $\vV$.
The corresponding \emph{dual basis} of $\vV^*$ is written as
$\{\ve_1^*,\ve_2^*,\ldots,\ve_n^*\}$. Thus, in terms of the Kronecker symbol
$\delta_{ij}$, we have $\li\ve_{i}^*,\ve_{j}\re_{\vV} =\delta_{ij}$ for all
$i,j\in \{1,2,\ldots,n\}$. If $J$ is a subset of $\{1,2,\ldots,n\}$, then
\begin{equation}\label{eq:basis-ann}
    \ann_{\vV}\bigl(\spn \{\ve_j\mid j\in J\} \bigr)
    = \spn\bigl\{\ve_i^*\mid i\in \{1,2,\ldots,n\}\setminus J \bigr\} .
\end{equation}
\par
Suppose that $\lambda\colon\vV\to\vW$ denotes a linear mapping into some vector
space $\vW$. The kernel and the image of $\lambda$ are written as $\ker\lambda$
and $\im\lambda$, respectively. The \emph{transpose} of $\lambda$ is defined to
be $\lambda^\Trans\colon\vW^*\to\vV^*\colon \vc^*\mapsto \vc^*\circ\lambda$.
Thus
\begin{equation*}
    \bigl\li\lambda^\Trans(\vc^*),\vx\bigr\re_{\vV} = \bigl\li \vc^*,\lambda(\vx)\bigr\re_{\vW}
    \mbox{~~for all~~}\vc^*\in\vW^*,\;\vx\in\vV .
\end{equation*}
Consequently, $\lambda^\Trans$ is linear and $(\lambda^\Trans)^\Trans =
\lambda$. Furthermore,
\begin{equation}\label{eq:ker-im}
    \ker\lambda^\Trans = \ann_{\vW}(\im\lambda)
    \mbox{~~and~~}
    \im \lambda^\Trans = \ann_{\vV}(\ker\lambda) .
\end{equation}
\par
The \emph{general linear group} of $\vV$, denoted by $\GL(\vV)$, comprises all
linear bijections from $\vV$ to itself. Let any $\psi\in\GL(\vV)$ be given. We
read off from \eqref{eq:ker-im}, applied to $\psi$, that
$\psi^\Trans\in\GL(\vV^*)$. If $\vT\leq\vV$, then
\begin{equation}\label{eq:psi(T)}
    \psi^\Trans\Bigl(\ann_{\vV}\bigl(\psi(\vT)\bigr)\Bigr)
    =
    \ann_{\vV}(\vT) .
\end{equation}
\par
For the remainder of this note, $\vV$ denotes a vector space that contains a
\emph{distinguished subspace} $\vS\leq\vV$. We also assume that $\vS$ is
equipped with a \emph{quadratic form} $Q\colon \vS\to \bF$. So $Q$ satisfies
two properties: (i) $Q(c\vx)=c^2 Q(\vx)$ for all $c\in \bF$ and all
$\vx\in\vS$. (ii) The mapping
\begin{equation}\label{eq:B}
    B\colon\vS\x\vS\to \bF\colon (\vx,\vy) \mapsto Q(\vx+\vy)-Q(\vx)-Q(\vy)
\end{equation}
is bilinear. We follow the convention to call $(\vS,Q)$ a \emph{metric vector
space}. Only at the beginning of Section~\ref{se:Q-dach}, one additional
assumption concerning $Q$ will be made.
\par
When dealing with $\vS$ or $Q$, we have to draw a clear distinction between
intrinsic and extrinsic notions. An \emph{intrinsic} notion rests upon $\vS$
being a vector space in its own right and disregards $\vV$. An \emph{extrinsic}
notion involves the vector space $\vV$. For example, the vector space $\vS^*$
dual to $\vS$ and the corresponding canonical pairing
$\lire_{\vS}\colon\vS^*\x\vS\to \bF$ are intrinsic notions. Each subset
$\vM\subseteq\vS$ has the (extrinsic) annihilator $\ann_{\vV}(\vM)$ as in
\eqref{eq:ann}. By replacing $\vV$ with $\vS$ and $\lire_{\vV}$ with
$\lire_{\vS}$ in \eqref{eq:ann}, we obtain the (intrinsic) annihilator of $\vM$
with respect to \ $\vS$, which is written as $\ann_{\vS}(\vM)$.

\section{Intrinsic and extrinsic notions coming from $Q$}\label{se:Q-sub}
We proceed by recalling well{\trenn}established intrinsic notions that arise
under the assumptions made at the end of the previous section; see, for
example, \cite[pp.~54--56]{bour-07a}, \cite[pp.~39--40]{elma+k+m-08a},
\cite[Ch.~4]{grove-02a}, \cite[Ch.~12]{grove-02a}, \cite[\S~7.B--C]{schroe-92a}
and \cite[pp.~54--57]{tayl-92a}.
\par
The bilinear form $B$, as defined in \eqref{eq:B}, is addressed as the
\emph{polar form} of $Q$. We note that $B$ is symmetric and satisfies
\begin{equation}\label{eq:2Q}
    B(\vx ,\vx )= 2 Q(\vx ) \mbox{~ for all~} \vx \in \vS .
\end{equation}
The \emph{radical} of $B$ is the subspace
\begin{equation}\label{eq:R}
    \vR :=\bigl\{\vx\in\vS\mid B(\vx,\vy)=0
    \mbox{~for all~} \vy\in \vS\bigr\} \leq \vS .
\end{equation}
We say that $B$ is \emph{non-degenerate} if, and only if, $\vR=\{\vo\}$. The
bilinearity of $B$ implies that the mapping
\begin{equation}\label{eq:D}
    D \colon \vS\to \vS^* \colon \vx \mapsto D(\vx):=B(\vx,\,\cdot\,)
\end{equation}
is well{\trenn}defined and linear. We therefore obtain
\begin{equation}\label{eq:B-D}
    B(\vx,\vy) = \bigl\li D(\vx),\vy \bigr\re_{\vS}
    \mbox{~~for all~~} \vx,\vy\in\vS
\end{equation}
and
\begin{equation}\label{eq:kerD}
    \ker D =
     \bigl\{\vx\in\vS\mid \bigl\li D(\vx),\vy\bigr\re_{\vS} = 0
    \mbox{~for all~}\vy\in\vS\bigr\} = \vR .
\end{equation}
From $B$ being symmetric, we have $\bigl\li D(\vx),\vy\bigr\re_{\vS} =
B(\vx,\vy) = B(\vy,\vx) = \bigl\li D(\vy),\vx\bigr\re_{\vS}$ for all
$\vx,\vy\in \vS$. Therefore and due to the identification of $\vS^{**}$ with
$\vS$, the mapping $D$ coincides with its transpose
$D^\Trans\colon\vS\to\vS^*$. From the second equation in \eqref{eq:ker-im},
applied to $D=D^\Trans$, and from \eqref{eq:kerD}, we get
\begin{equation}\label{eq:imD}
    \im D = \ann_{\vS}(\ker D) =  \ann_{\vS}(\vR) \leq \vS^* .
\end{equation}

\begin{rem}\label{rem:char2}
Let $\Char \bF = 2$. Then \eqref{eq:2Q} implies $B(\vx,\vx)=0$ for all
$\vx\in\vS$, that is, $B$ is an alternating bilinear form. Hence $\dim\vS -
\dim\vR$ is even.
\end{rem}

\begin{rem}\label{rem:modif1}
Let $\Char \bF\neq 2$. Now $Q$ can be recovered from $B$, since \eqref{eq:2Q}
shows $Q(\vx)=\frac{1}{2}B(\vx,\vx)$ for all $\vx\in\vS$. Therefore, under the
premise $\Char \bF\neq 2$, it is customary to address $\frac{1}{2}B$ rather
than $B$ as the polar form of $Q$. However, in order to obtain a unified
approach, we do not follow this convention.
\end{rem}

Next, we introduce several extrinsic notions coming from $(\vS, Q)$ and $\vV$.
In view of results following behind, we thereby adopt the ``reverse'' notation
\begin{equation}\label{eq:SR-dach}
    \hatS := \ann_{\vV}(\vR)\leq\vV^*
    \mbox{~~and~~}\hatR := \ann_{\vV}(\vS)\leq\hatS .
\end{equation}
A crucial concept is the following binary relation on $\hatS\x\vS$, which is
based upon the polar form $B$ of the given quadratic form $Q$.
\begin{defn}\label{defn:relB}
A linear form $\va^*\in\hatS$ is said to be \emph{$B$-linked} to a vector
$\vx\in\vS$, in symbols $\va^*\relB\vx$, precisely when
\begin{equation}\label{eq:relB}
    \li\va^*,\vy\re_{\vV} = B(\vx,\vy)  \mbox{~~for all~~} \vy\in\vS .
\end{equation}
\end{defn}

It will be advantageous to describe the relation $\relB$ in a different way. To
this end, we establish the mapping $N\colon\hatS\to \vS^*\colon \va^*\mapsto
N(\va^*)$ by setting
\begin{equation}\label{eq:N}
    \bigl\li N(\va^*),\vy\bigr\re_{\vS} := \li\va^*,\vy\re_{\vV}
    \mbox{~~for all~~} \vy\in\vS .
\end{equation}
Thus $N(\va^*)$ is just the restriction of the linear form $\va^*$ to $\vS$ or,
informally speaking, $N(\va^*)$ is a ``narrowed'' version of $\va^*$. The
mapping $N$ is linear and
\begin{equation}\label{eq:kerN}
    \ker N = \ann_{\vV}(\vS)=\hatR .
\end{equation}
By the definition of $N$, the image of $N$ is contained in $\ann_{\vS}(\vR)$.
On the other hand, each linear form belonging to $\ann_{\vS}(\vR)$ has a
pre-image under $N$, since it can be extended to at least one linear form
$\vV\to \bF$. Thus $\im N = \ann_{\vS}(\vR)$. Now, from \eqref{eq:imD}, we get
\begin{equation}\label{eq:imN=imD}
    \im N = \ann_{\vS}(\vR) =\im D .
\end{equation}
\par
Next, we express four properties of the relation $\relB$ using the mappings $D$
and $N$, as described in \eqref{eq:D} and \eqref{eq:N}, respectively. The last
two properties can be thought of as a variant form of the \emph{Riesz
representation theorem}; see, for example, \cite[Thm.~11.5]{roman-08a}.
\begin{lem}\label{lem:linear}
The relation $\relB$ from Definition~\ref{defn:relB} satisfies the following
properties:
\begin{enumerate}\itemsep0pt
\item\label{lem:linear.a} Let $\va^*\in\hatS$ and $\vx\in\vS$. Then
    $\va^*\relB\vx$ is equivalent to $N(\va^*)=D(\vx)$.
\item\label{lem:linear.b} Let $\vf_i^*\relB \vs_i$ and $c_i\in \bF$ for
    $i\in\{1,2,\ldots,k\}$. Then $\bigl({\sum_{i=1}^k c_i\vf_i^*}\bigr)
    \relB \bigl({\sum_{i=1}^k c_i\vs_i}\bigr)$.
\item\label{lem:linear.c} Let $\vf^*\in\hatS$. Then there is $\vs\in\vS$
    such that $\{\vx\in \vS\mid \vf^*\relB\vx\}$ equals the coset $\vs+\vR$
    of the subspace $\vR = \ker D \leq \vS$.
\item\label{lem:linear.d} Let $\vs\in\vS$. Then there is $\vf^*\in\hatS$
    such that $\{\va^*\in \hatS\mid \va^*\relB\vs\}$ equals the coset
    $\vf^*+\hatR$ of the subspace $\hatR=\ker N\leq \hatS$.
\end{enumerate}
\end{lem}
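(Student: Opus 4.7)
The plan is to reduce everything to part~\ref{lem:linear.a}, which converts the binary relation $\relB$ into an equality of linear forms on $\vS$; once this translation is in place, parts~\ref{lem:linear.b}--\ref{lem:linear.d} become routine consequences of the linearity of $N$ and $D$ together with the identifications $\ker D=\vR$, $\ker N=\hatR$, and the key equality $\im N=\im D$ from \eqref{eq:imN=imD}.

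For~\ref{lem:linear.a}, I would simply chain the two defining identities. Expanding $\va^*\relB\vx$ by Definition~\ref{defn:relB} gives $\li\va^*,\vy\re_{\vV}=B(\vx,\vy)$ for all $\vy\in\vS$; rewriting the left hand side with \eqref{eq:N} and the right hand side with \eqref{eq:B-D} turns this into $\bigl\li N(\va^*),\vy\bigr\re_{\vS}=\bigl\li D(\vx),\vy\bigr\re_{\vS}$ for all $\vy\in\vS$, which is equivalent to the equality $N(\va^*)=D(\vx)$ in $\vS^*$.

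Part~\ref{lem:linear.b} is then immediate: applying~\ref{lem:linear.a} to each hypothesis $\vf_i^*\relB\vs_i$ gives $N(\vf_i^*)=D(\vs_i)$, and the linearity of $N$ and $D$ yields $N\bigl(\sum c_i\vf_i^*\bigr)=D\bigl(\sum c_i\vs_i\bigr)$, which translates back to the claimed linked pair via~\ref{lem:linear.a}.

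For~\ref{lem:linear.c} and~\ref{lem:linear.d}, the crucial ingredient is \eqref{eq:imN=imD}. Given $\vf^*\in\hatS$, the element $N(\vf^*)$ lies in $\im N=\im D$, so I can pick $\vs\in\vS$ with $D(\vs)=N(\vf^*)$; then by~\ref{lem:linear.a} and \eqref{eq:kerD}, $\vf^*\relB\vx$ is equivalent to $D(\vx)=D(\vs)$, that is, $\vx\in\vs+\ker D=\vs+\vR$. Symmetrically, for~\ref{lem:linear.d}, given $\vs\in\vS$, the element $D(\vs)$ lies in $\im D=\im N$, so I can pick $\vf^*\in\hatS$ with $N(\vf^*)=D(\vs)$; then~\ref{lem:linear.a} and \eqref{eq:kerN} give $\va^*\relB\vs$ iff $\va^*\in\vf^*+\hatR$. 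I do not foresee any genuine obstacle here; the only subtle point worth flagging is that the existence of $\vs$ in~\ref{lem:linear.c} and of $\vf^*$ in~\ref{lem:linear.d} relies precisely on the coincidence $\im N=\im D$ established in \eqref{eq:imN=imD}, so the proof should cite that equation explicitly rather than re-derive it.
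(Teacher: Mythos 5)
Your proposal is correct and follows essentially the same route as the paper: part \eqref{lem:linear.a} via \eqref{eq:B-D} and \eqref{eq:N}, part \eqref{lem:linear.b} by linearity of $N$ and $D$, and parts \eqref{lem:linear.c}, \eqref{lem:linear.d} by invoking $\im N=\im D$ from \eqref{eq:imN=imD} together with $\ker D=\vR$ and $\ker N=\hatR$. Your explicit flagging of where \eqref{eq:imN=imD} is indispensable matches the paper's use of it exactly.
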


\begin{proof}
\eqref{lem:linear.a} We have $\va^*\relB\vx$ precisely when \eqref{eq:relB} is
satisfied. Using \eqref{eq:B-D}, the latter condition can be rewritten as
$\bigl\li N(\va^*),\vy\bigr\re_{\vS} = \bigl\li D(\vx),\vy\bigr\re_{\vS}$ for
all $\vy\in\vS$ or, said differently, $N(\va^*)=D(\vx)$.
\par
\eqref{lem:linear.b} Taking into account the linearity of the mappings $N$ and
$D$, the claim is an immediate consequence of \eqref{lem:linear.a}.
\par
\eqref{lem:linear.c} From \eqref{eq:imN=imD}, there exists $\vs\in\vS$ with
$N(\vf^*)=D(\vs)$. According to \eqref{lem:linear.a}, the given linear form
$\vf^*\in\hatS$ is $B$-linked to a vector $\vx\in\vS$ precisely when
$D(\vx)=D(\vs)$, which is equivalent to $\vx\in\vs+\ker D$. Now the assertion
follows, since \eqref{eq:kerD} shows $\ker D = \vR$.
\par
\eqref{lem:linear.d} We follow the idea of proof from \eqref{lem:linear.c} with
$N$ and $D$ changing their roles. The set of all $\va^*\in\hatS$ that are
$B$-linked to $\vs$ equals the (non-empty) pre-image of $D(\vs)$ under $N$.
From \eqref{eq:kerN}, this pre-image can be written as $\vf^*+\ker N =
\vf^*+\hatR$ for some $\vf^*\in\hatS$.
\end{proof}

\begin{exa}\label{exa:rad-char2}
Let $\dim\vV=3$ and $\dim\vS=2$. We may choose a basis $\{\ve_1,\ve_2,\ve_3\}$
of $\vV$ such that $\vS=\spn\{\ve_1,\ve_2\}$. Also, we assume that
$Q\colon\vS\to \bF$ satisfies
\begin{equation*}
    Q(x_1\ve_1+x_2\ve_2) = x_2^2 \mbox{~~for all~~} x_1,x_2 \in \bF .
\end{equation*}
In order to describe the relation $\relB$ by following
Lemma~\ref{lem:linear}~\eqref{lem:linear.a}, we have to specify the mappings
$N$ and $D$. From \eqref{eq:kerN} and \eqref{eq:basis-ann}, $\ker N = \hatR =
\ann_{\vV}(\vS) = \spn\{\ve_3^*\}$. Next, we determine $\ker D=\vR$, that is,
the radical of $B$. From \eqref{eq:B}, we obtain
\begin{equation*}
    B\left(x_1\ve_1+x_2\ve_2,y_1\ve_1+y_2\ve_2\right) = 2x_2y_2
    \mbox{~~for all~~} x_1,x_2,y_1,y_2 \in \bF .
\end{equation*}
The factor $2\in \bF$ in the preceding formula makes us distinguish two cases.
\par
\emph{Case}~1. Let $\Char \bF=2$. Here $B$ is a zero form. Consequently, $\vR =
\vS$, $D$ is a zero mapping and $\hatS = \hatR$. The last equation implies that
$N$ is a zero mapping, too. Thus $D(\vx)=N(\va)$ or, equivalently,
$\va^*\relB\vx$ holds for all pairs $(\va^*,\vx)\in\hatS\x\vS$.
\par
\emph{Case}~2. Let $\Char \bF\neq 2$. Here $B$ has the radical $\vR =
\spn\{\ve_1\}=\ker D$, whence \eqref{eq:basis-ann} yields
$\hatS=\ann_{\vV}(\vR) = \spn\{\ve_2^*,\ve_3^*\}$. As we noted above, $\ker N =
\spn\{\ve_3^*\}$. Therefore, $\{N(\ve_2^*)\}$ is a basis of $\im N$ and, by
virtue of \eqref{eq:imN=imD}, also a basis of $\im D$. Indeed, $D(\ve_2)=2
N(\ve_2^*)$. To sum up, the following holds for all $a_2,a_3,x_1,x_2\in \bF$:
The linear form $\va^*:=a_2\ve_2^*+a_3\ve_3^*\in\hatS$ is $B$-linked to the
vector $\vx:=x_1\ve_1+x_2\ve_2\in\vS$ if, and only if, $a_2 N(\ve_2^*) =
N(\va^*) = D(\vx) = 2 x_2 N(\ve_2^*)$. In the present setting, this is
equivalent to $a_2=2x_2$.
\begin{figure}[!th]\unitlength1cm
  \centering
  \begin{picture}(7.5,4.0)
  \small
    \put(0,0){\includegraphics[height=8\unitlength]{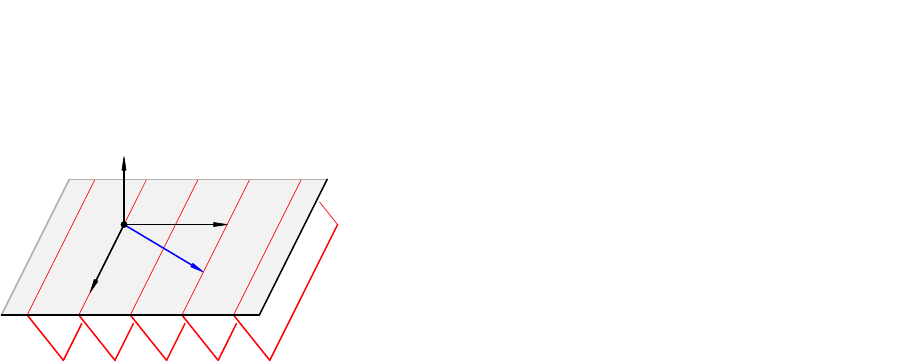}}
    \put(1.65,1.65){$\ve_1$}
    \put(4.7,3.2){$\ve_2$}
    \put(2.85,4.3){$\ve_3$}
    \put(1.5,3.6){$\vS$}
    \color{blue}
    \put(4.1,1.8){$\vs$}
    \color{red}
    \put(0.20,0.45){$\vf^*$}
    \put(1.12,0.45){$-1$}
    \put(2.45,0.45){$0$}
    \put(3.59,0.45){$1$}
    \put(4.73,0.45){$2$}
    \put(5.87,0.45){$3$}
  \end{picture}
   \caption{$B$-linked elements $\vf^*\relB\vs$}
   \label{abb:relB}
\end{figure}
\par
Figure~\ref{abb:relB} illustrates the second case over the real numbers $\bR$.
A linear form $\vf^*:= 2\ve^*_2+f_3\ve^*_3$ with $f_3\in\bR\setminus\{0\}$ is
visualised by some of its level surfaces (parallel planes). Each of these
planes is labelled with a number equal to the constant value assumed there by
$\vf^*$. The linear form $\vf^*$ is $B$-linked to all vectors of the coset
$\ve_2+\spn\{\ve_1\}$, in particular to the highlighted vector
$\vs:=s_1\ve_1+\ve_2$ with $s_1\in\bR\setminus\{0\}$.
\end{exa}

\begin{rem}\label{rem:bij}
The findings in Lemma~\ref{lem:linear} allow for another interpretation: The
linear mapping $N$ gives rise to a linear bijection $N_0$ of the factor space
$\hatS/\hatR = \hatS/\ker N$ onto the subspace $\im N\leq\vS^*$. Explicitly,
$N_0(\va^*+\ker N)=N(\va^*)$ for all $\va^*\in\hatS$. This result is known
under the name \emph{first isomorphism theorem}; see, for example,
\cite[Thm.~3.5]{roman-08a}. Likewise, $D$ yields a linear bijection $D_0$ of
$\vS/\vR = \vS/\ker D$ onto $\im D\leq\vS^*$. From \eqref{eq:imN=imD}, the
product $D_0^{-1}\circ N_0 \colon \hatS/\hatR\to\vS/\vR$ is a linear bijection,
too. The latter mapping takes $\va^*+\hatR\in\hatS/\hatR$ to
$\vx+\vR\in\vS/\vR$ precisely when $\va^*\relB\vx$. Thus, informally speaking,
the relation $\relB$ describes the linear bijection $D_0^{-1}\circ N_0$ by
drawing on representatives of corresponding cosets.
\par
Also, it seems worth noting that $\hatS / \hatR$ is canonically isomorphic to
the dual space of $\vS / \vR$. The canonical pairing then satisfies
\begin{equation*}
    \li\va^*+\hatR,\vx+\vR\re_{\vS/\vR} = \li\va^*,\vx\re_{\vV}
    \mbox{~~for all~~}
    (\va^*,\vx)\in\hatS\x\vS .
\end{equation*}
This follows from a more general result which can
be found in \cite[p.~67]{greub-75a} at the very end of Section 2.23.
\end{rem}

\section{The dual quadratic form of $Q$}\label{se:Q-dach}
While we still adhere to the global settings adopted at the end of
Section~\ref{se:pre}, from now on there will be an additional assumption
regarding the non-zero vectors in the radical $\vR$, namely
\begin{equation}\label{eq:Q-rad=0}
    Q( \vx) = 0 \mbox{~~for all~~}\vx\in\vR\setminus\{\vo\} .
\end{equation}
The reason behind this will be clarified in Remark~\ref{rem:rad=0}. If $\Char
\bF=2$, then there are quadratic forms that fail to satisfy \eqref{eq:Q-rad=0}.
One instance can be read off from Example \ref{exa:rad-char2}, Case~1: The
vector $\ve_2\in\vR$ appearing there satisfies $Q(\ve_2)=1$. If $\Char \bF\neq
2$, then the circumstances are quite different: Indeed, \eqref{eq:R} and
\eqref{eq:2Q} show $Q(\vx)=\frac{1}{2}B(\vx,\vx)=0$ for all $\vx\in\vR$, so
that \eqref{eq:Q-rad=0} holds in any case.
\par
We are now in a good position to present our first main result.
\begin{thm}\label{thm:Q-dach}
Let $\vS\leq \vV$ and let $Q\colon \vS\to \bF$ be a quadratic form subject to
\eqref{eq:Q-rad=0}. Then there exists a unique mapping $\hatQ\colon \hatS\to
\bF$ such that
\begin{equation}\label{eq:Q-dach}
    \va^*\relB\vx   \mbox{~~implies~~}  \hatQ(\va^*) = Q(\vx) .
\end{equation}
This mapping $\hatQ$ is a quadratic form.
\end{thm}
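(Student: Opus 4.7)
The plan is to first establish well-definedness of $\hatQ$ via the formula \eqref{eq:Q-dach}, then obtain existence from Lemma~\ref{lem:linear}, and finally verify the quadratic-form axioms by transferring the corresponding properties of $Q$ through the relation $\relB$.

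First I would check well-definedness, which is where the hypothesis \eqref{eq:Q-rad=0} enters crucially. Suppose $\va^*\relB\vx$ and $\va^*\relB\vx'$ for the same $\va^*\in\hatS$. By Lemma~\ref{lem:linear}\,\eqref{lem:linear.c}, the set of vectors $B$-linked to $\va^*$ is a coset of $\vR$, so $\vr:=\vx'-\vx\in\vR$. Expanding the polar-form identity,
\begin{equation*}
    Q(\vx')=Q(\vx+\vr)=Q(\vx)+Q(\vr)+B(\vx,\vr).
\end{equation*}
The term $B(\vx,\vr)$ vanishes because $\vr\in\vR$, and $Q(\vr)=0$ holds either trivially (if $\vr=\vo$) or by the standing assumption \eqref{eq:Q-rad=0}. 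Thus $Q(\vx')=Q(\vx)$, and $\hatQ(\va^*):=Q(\vx)$ is independent of the chosen representative. Existence of some $\vx$ with $\va^*\relB\vx$ for every $\va^*\in\hatS$ is precisely what Lemma~\ref{lem:linear}\,\eqref{lem:linear.c} guarantees, so $\hatQ$ is defined on the whole of $\hatS$. Uniqueness is immediate: any mapping satisfying \eqref{eq:Q-dach} must coincide with this $\hatQ$ on every $\va^*\in\hatS$.

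Next I would verify the two defining properties of a quadratic form. Homogeneity is straightforward: if $\va^*\relB\vx$ and $c\in\bF$, then Lemma~\ref{lem:linear}\,\eqref{lem:linear.b} yields $c\va^*\relB c\vx$, hence $\hatQ(c\va^*)=Q(c\vx)=c^2Q(\vx)=c^2\hatQ(\va^*)$. For bilinearity of the polar form $\hatB$ of $\hatQ$, pick $\va^*\relB\vx$ and $\vb^*\relB\vy$. By Lemma~\ref{lem:linear}\,\eqref{lem:linear.b} we have $\va^*+\vb^*\relB\vx+\vy$, so
\begin{equation*}
    \hatQ(\va^*+\vb^*)=Q(\vx+\vy)=Q(\vx)+Q(\vy)+B(\vx,\vy)=\hatQ(\va^*)+\hatQ(\vb^*)+B(\vx,\vy),
\end{equation*}
which identifies $\hatB(\va^*,\vb^*)$ with $B(\vx,\vy)$. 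Since $B$ vanishes whenever one argument lies in $\vR$, the value $B(\vx,\vy)$ depends only on the cosets $\vx+\vR$ and $\vy+\vR$; and the assignment $\va^*\mapsto\vx+\vR$ (where $\va^*\relB\vx$) coincides with the linear bijection $D_0^{-1}\circ N_0\colon\hatS/\hatR\to\vS/\vR$ from Remark~\ref{rem:bij}, pre-composed with the canonical projection $\hatS\to\hatS/\hatR$. Thus $\hatB$ is the pull-back of the (well-defined, bilinear) form induced by $B$ on $\vS/\vR\x\vS/\vR$, and is therefore bilinear.

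The one delicate point is well-definedness: the identity $Q(\vx+\vr)=Q(\vx)+Q(\vr)+B(\vx,\vr)$ only collapses to $Q(\vx+\vr)=Q(\vx)$ when $Q$ vanishes on $\vR$, which is exactly the content of \eqref{eq:Q-rad=0}. Everything else, including the bilinearity of $\hatB$, reduces to linearity of the relation $\relB$ as formalised in Lemma~\ref{lem:linear} and Remark~\ref{rem:bij}.
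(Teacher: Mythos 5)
Your proof is correct and follows essentially the same route as the paper: well-definedness via the coset description of Lemma~\ref{lem:linear}\,\eqref{lem:linear.c} together with the polar-form expansion and \eqref{eq:Q-rad=0}, homogeneity via Lemma~\ref{lem:linear}\,\eqref{lem:linear.b}, and the key identity $\hatB(\va^*,\vb^*)=B(\vx,\vy)$ for $B$-linked pairs. The only cosmetic difference is that you finish the bilinearity of $\hatB$ by viewing it as the pull-back of the form induced on $\vS/\vR$ (via Remark~\ref{rem:bij}), whereas the paper derives it directly from that identity and the linearity of $\relB$; both are valid closings of a step the paper itself calls straightforward.
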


\begin{proof}
Upon choosing $\vf^*\in\hatS$ arbitrarily, there is a vector $\vs\in\vS$ with
$\vf^*\relB\vs$ according to Lemma~\ref{lem:linear}~\eqref{lem:linear.c}. So
there is at most one function $\hatQ$ subject to \eqref{eq:Q-dach}.
Furthermore, $\vf^*\relB\vx$ forces $\vx-\vs\in \vR$. Now, from
\eqref{eq:Q-rad=0} and \eqref{eq:R}, we obtain
\begin{equation}\label{eq:Q-wohldef}
    Q(\vx) = Q\bigl((\vx-\vs)+\vs\bigr)
           = \underbrace{Q(\vx-\vs)}_{=\,0}+Q(\vs)
           + \underbrace{B(\vx-\vs,\vs)}_{=\,0}
           = Q(\vs) .
\end{equation}
Consequently, \eqref{eq:Q-dach} provides a consistent definition of $\hatQ$.
\par
It remains to establish that $\hatQ$ is a quadratic form. First, with
$\vf^*\relB\vs$ as above, let any $c\in \bF$ be given. In order to verify that
$\hatQ(c\vf^*)=c^2\hatQ(\vf^*)$, we make use of
Lemma~\ref{lem:linear}~\eqref{lem:linear.b} to obtain $(c\vf^*) \relB (c\vs)$.
Thus
\begin{equation*}
    \hat Q(c\vf^*) = Q(c\vs) = c^2 Q(\vs) = c^2 \hat Q(\vf^*) ,
\end{equation*}
as required. Next, we consider the mapping
\begin{equation*}
    \hatB\colon \hatS\x\hatS\to \bF \colon
    (\va^*,\vb^*)\mapsto \hatQ(\va^*+\vb^*)-\hatQ(\va^*)-\hatQ(\vb^*) .
\end{equation*}
We claim that
\begin{equation}\label{eq:B-B-dach1}
        (\vf_1^*\relB\vs_1 \mbox{~~and~~}\vf_2^*\relB\vs_2)
        \mbox{~~implies~~}
        \hatB(\vf_1^*,\vf_2^*) = B(\vs_1,\vs_2) .
\end{equation}
The assumptions in \eqref{eq:B-B-dach1} and
Lemma~\ref{lem:linear}~\eqref{lem:linear.b} yield $(\vf_1^*+\vf_2^*) \relB
(\vs_1+\vs_2)$. Hence, from \eqref{eq:Q-dach}, we obtain
\begin{equation*}
    \begin{aligned}
    \hatB(\vf_1^*,\vf_2^*) &= \hatQ(\vf_1^*+\vf_2^*)-\hatQ(\vf_1^*)-\hatQ(\vf_2^*)\\
                           &= Q(\vs_1+\vs_2)-Q(\vs_1)-Q(\vs_2)  = B(\vs_1,\vs_2) .
    \end{aligned}
\end{equation*}
By virtue of \eqref{eq:B-B-dach1} and
Lemma~\ref{lem:linear}~\eqref{lem:linear.b}, it is now straightforward to
derive the bilinearity of $\hatB$ from the bilinearity of $B$.
\end{proof}

We are thus led to the following definition.

\begin{defn}
Let a subspace $\vS\leq\vV$ and a quadratic form $Q\colon\vS\to \bF$ be given
as in Theorem~\ref{thm:Q-dach}. Then the quadratic form $\hatQ\colon\hatS\to
\bF$ appearing there, which is characterised by \eqref{eq:Q-dach}, is called
the \emph{dual quadratic form} of $Q$.
\end{defn}

The dual space of $\vV$ is an essential part of the definition from above. For
example, it has been used in \eqref{eq:SR-dach} to define $\hatS\leq\vV^*$.

\begin{rem}\label{rem:rad=0}
The constraint \eqref{eq:Q-rad=0} has been used in the proof of
Theorem~\ref{thm:Q-dach} in order to establish equation \eqref{eq:Q-wohldef},
which ensures that \eqref{eq:Q-dach} provides a well{\trenn}defined mapping
$\hatQ$. An analogous constraint appears in the closely related context of
induced quadratic forms on factor spaces; see \cite[pp.~56--57]{bour-07a} or
\cite[pp.~56--57]{tayl-92a}. If a quadratic form $Q\colon\vS\to \bF$ does not
meet \eqref{eq:Q-rad=0}, then our definition of the mapping $\hatQ$ fails.
Indeed, suppose that $Q(\vs)\neq 0$ for some $\vs\in\vR\setminus\{\vo\}$.
According to Lemma~\ref{lem:linear}~\eqref{lem:linear.c}, the zero form
$\vo^*\in\hatS$ satisfies not only $\vo^*\relB\vo$ but also $\vo^*\relB\vs$,
whereas $Q(\vo)=0\neq Q(\vs)$.
\end{rem}

We proceed by taking a closer look at the dual quadratic form of $Q$.

\begin{prop}\label{prop:Q-dach}
Let $Q$ be given as in Theorem~\ref{thm:Q-dach}. Then $\hatQ$, the dual
quadratic form of $Q$, and the polar form $\hatB$ of $\hatQ$ satisfy the
following properties:
\begin{enumerate}\itemsep0pt
\item\label{prop:Q-dach.a} If $\vf^*\relB\vs$, then $\hatB(\va^*,\vf^*) =
    \li \va^*,\vs\re_{\vV}$ for all $\va^*\in\hatS$.
\item\label{prop:Q-dach.b} The radical of $\hatB$ equals $\hatR =
    \ann_{\vV}(\vS)$ and $\hatQ(\va^*)=0$ for all
    $\va^*\in\hatR\setminus\{\vo^*\}$.
\end{enumerate}
\end{prop}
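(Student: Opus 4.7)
The plan is to exploit the key identity \eqref{eq:B-B-dach1} from the proof of Theorem~\ref{thm:Q-dach} (namely: $\vf_1^*\relB\vs_1$ and $\vf_2^*\relB\vs_2$ imply $\hatB(\vf_1^*,\vf_2^*)=B(\vs_1,\vs_2)$), combined with the existence statements in Lemma~\ref{lem:linear} and the definitions of the intertwining maps $D$ and $N$.

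For part~\eqref{prop:Q-dach.a}, I would fix $\vf^*\relB\vs$ and an arbitrary $\va^*\in\hatS$. Lemma~\ref{lem:linear}~\eqref{lem:linear.c} produces some $\vy\in\vS$ with $\va^*\relB\vy$. Applying \eqref{eq:B-B-dach1} then gives $\hatB(\va^*,\vf^*)=B(\vy,\vs)$, and \eqref{eq:B-D} together with Lemma~\ref{lem:linear}~\eqref{lem:linear.a} and the defining property \eqref{eq:N} of $N$ yield
\begin{equation*}
B(\vy,\vs)=\bigl\li D(\vy),\vs\bigr\re_{\vS}
         =\bigl\li N(\va^*),\vs\bigr\re_{\vS}
         =\li\va^*,\vs\re_{\vV}.
\end{equation*}
The right-hand side depends only on $\va^*$ and $\vs$, so the answer is rightly independent of the auxiliary choice of $\vy$.

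For part~\eqref{prop:Q-dach.b}, I would read off the radical of $\hatB$ directly from~\eqref{prop:Q-dach.a}. A linear form $\va^*\in\hatS$ lies in this radical precisely when $\hatB(\va^*,\vf^*)=0$ for every $\vf^*\in\hatS$. Choosing for each such $\vf^*$ a partner $\vs\in\vS$ with $\vf^*\relB\vs$ via Lemma~\ref{lem:linear}~\eqref{lem:linear.c} and invoking~\eqref{prop:Q-dach.a} rewrites the defining condition as $\li\va^*,\vs\re_{\vV}=0$ for all those $\vs$. By Lemma~\ref{lem:linear}~\eqref{lem:linear.d}, \emph{every} $\vs\in\vS$ occurs as such a partner, hence the condition reduces to $\va^*\in\ann_{\vV}(\vS)=\hatR$, which is the first assertion. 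For the remaining claim, I would observe that any $\va^*\in\hatR=\ker N$ satisfies $N(\va^*)=\vo^*=D(\vo)$, so $\va^*\relB\vo$ by Lemma~\ref{lem:linear}~\eqref{lem:linear.a}; the defining property \eqref{eq:Q-dach} of the dual quadratic form then yields $\hatQ(\va^*)=Q(\vo)=0$.

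No serious obstacle is expected, since the whole argument is essentially bookkeeping with $D$, $N$ and the existence parts of Lemma~\ref{lem:linear}. The only point that warrants care is to invoke the correct direction each time: \eqref{lem:linear.c} to find a $\vy$ associated with $\va^*$ in (a), and \eqref{lem:linear.d} to guarantee surjectivity of the ``partner map'' onto $\vS$ when characterising the radical in (b).
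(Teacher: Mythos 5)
Your proof is correct and rests on essentially the same ingredients as the paper's: the identity \eqref{eq:B-B-dach1} combined with the existence statements of Lemma~\ref{lem:linear}. The only (harmless) deviations are that in \eqref{prop:Q-dach.a} you route the final step through $D$ and $N$ where the paper simply evaluates \eqref{eq:relB} at $\vy=\vs$, and in \eqref{prop:Q-dach.b} you characterise the radical by letting $\vf^*$ range over $\hatS$ and using Lemma~\ref{lem:linear}~\eqref{lem:linear.d} to see that every $\vs\in\vS$ occurs as a partner, whereas the paper fixes $\vf^*$, picks a partner $\vs$, and shows $\vs\in\vR=\ker D$ if and only if $\vf^*\in\ker N=\hatR$ --- both arguments are valid.
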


\begin{proof}
\eqref{prop:Q-dach.a} From Lemma~\ref{lem:linear}~\eqref{lem:linear.c}, for
each $\va^*\in\hatS$ there is an auxiliary vector $\vu\in\vS$ (depending on
$\va^*$) with $\va^*\relB\vu$. Now \eqref{eq:B-B-dach1} gives
$\hatB(\va^*,\vf^*)=B(\vu,\vs)$, whereas \eqref{eq:relB} implies $B(\vu,\vs) =
\li\va^*,\vs\re_{\vV}$.
\par
\eqref{prop:Q-dach.b} First, we determine the radical of $\hatB$. Let
$\vf^*\in\hatS$. By virtue of Lemma~\ref{lem:linear}~\eqref{lem:linear.c}, we
may pick $\vs\in\vS$ with $\vf^*\relB\vs$. The given linear form $\vf^*$ is in
the radical of $\hatB$ if, and only if, $\hatB(\va^*,\vf^*) = 0$ for all
$\va^*\in\hatS$. According to \eqref{prop:Q-dach.a}, this condition is
equivalent to $\li\va^*,\vs\re_{\vV} = 0$ for all $\va^*\in\hatS$, which in
turn characterises $\vs$ as being in $\vR$. From \eqref{eq:kerD} and
Lemma~\ref{lem:linear}~\eqref{lem:linear.a}, $\vs\in\vR=\ker D$ holds if, and
only if, $D(\vs)=N(\vf^*)$ is a zero linear form. Using \eqref{eq:kerN}, the
latter turns out to be equivalent to $\vf^*\in\ker N = \hatR$.
\par
Next, let $\va^*\in\hatR\setminus\{\vo^*\}$. Then $\li\va^*,\vy\re_{\vV}=0$ for
all $\vy\in\vS$ implies that $\va^*$ is $B$-linked to the zero vector
$\vo\in\vS$. Therefore, $\hatQ(\va^*)=Q(\vo)=0$.
\end{proof}

In view of Proposition~\ref{prop:Q-dach}~\eqref{prop:Q-dach.b}, we may now
apply, \emph{mutatis mutandis}, our previous definitions and results to the
quadratic form $\hatQ$. Thereby $\vV$ and $\vV^*$ swap their roles. The same
applies to $\vS$ and $\hatS$ as well as $\vR$ and $\hatR$, as follows from
$\ann_{\vV^*}(\hatR)=\vS$ and $\ann_{\vV^*}(\hatS)=\vR$, respectively. In
addition, we obtain linear mappings $\hatD\colon \hatS\to(\hatS)^*$ and
$\hatN\colon \vS\to(\hatS)^*$ together with a relation $\relhatB$ on
$\vS\x\hatS$. We note that, in analogy to \eqref{eq:relB}, $\vy\relhatB\vb^*$
is equivalent to
\begin{equation}\label{eq:relBdach}
    \li\va^*,\vy\re_{\vV} = \hatB(\va^*,\vb^*)
    \mbox{~~for all~~} \va^*\in\hatS .
\end{equation}
\par
We now show our second main result.

\begin{thm}\label{thm:Q-dachdach}
Let $Q\colon\vS\to \bF$ be a quadratic form as in Theorem~\ref{thm:Q-dach} and
let $\hatQ\colon\hatS\to \bF$ be its dual quadratic form. Then the following
hold:
\begin{enumerate}\itemsep0pt
\item\label{thm:Q-dachdach.a} The relation $\relB$ on $\hatS\x\vS$, which
    arises from the polar form of $Q$, has the relation $\relhatB$ on
    $\vS\x\hatS$, which arises from the polar form of $\hatQ$, as its
    converse relation.
\item\label{thm:Q-dachdach.b} The dual quadratic form of $\hatQ$ coincides
    with $Q$.
\end{enumerate}
\end{thm}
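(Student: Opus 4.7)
For part~\eqref{thm:Q-dachdach.a}, the plan is to prove $\va^*\relB\vx\iff\vx\relhatB\va^*$ for every $(\va^*,\vx)\in\hatS\x\vS$. The forward direction drops out of Proposition~\ref{prop:Q-dach}~\eqref{prop:Q-dach.a}: if $\va^*\relB\vx$, then $\hatB(\vb^*,\va^*)=\li\vb^*,\vx\re_{\vV}$ for every $\vb^*\in\hatS$ (using the symmetry of $\hatB$), which is precisely the defining condition \eqref{eq:relBdach} for $\vx\relhatB\va^*$. For the converse direction I plan a coset argument. Lemma~\ref{lem:linear}~\eqref{lem:linear.c} realises $\{\vx\in\vS\mid\va^*\relB\vx\}$ as some coset of $\vR$ in $\vS$, and applying the analogue of Lemma~\ref{lem:linear}~\eqref{lem:linear.d} to $\hatQ$ realises $\{\vx\in\vS\mid\vx\relhatB\va^*\}$ as another coset of $\vR$ in $\vS$. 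The inclusion from the forward direction then forces the two cosets to coincide, since distinct cosets of the same subspace are disjoint.

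Before invoking the swapped lemma, I have to verify that the machinery of Section~\ref{se:Q-dach} genuinely transfers from $Q$ to $\hatQ$. The required ingredients are the double-annihilator identities $\ann_{\vV^*}(\hatR)=\vS$ and $\ann_{\vV^*}(\hatS)=\vR$, so that the ``$\hatS$'' and ``$\hatR$'' constructed from $\hatQ$ are $\vS$ and $\vR$ respectively, together with Proposition~\ref{prop:Q-dach}~\eqref{prop:Q-dach.b}, which furnishes the analogue of \eqref{eq:Q-rad=0} needed to apply Theorem~\ref{thm:Q-dach} to $\hatQ$. These checks are essentially bookkeeping; the paragraph preceding Theorem~\ref{thm:Q-dachdach} has already collected them.

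Part~\eqref{thm:Q-dachdach.b} is then a one-line deduction. The dual quadratic form of $\hatQ$, say $\hat{\hatQ}$, is defined on $\ann_{\vV^*}(\hatR)=\vS$ and is characterised by the analogue of \eqref{eq:Q-dach}: $\vy\relhatB\vb^*$ implies $\hat{\hatQ}(\vy)=\hatQ(\vb^*)$. For an arbitrary $\vy\in\vS$, the swapped Lemma~\ref{lem:linear}~\eqref{lem:linear.d} supplies some $\vb^*\in\hatS$ with $\vy\relhatB\vb^*$; by part~\eqref{thm:Q-dachdach.a} this is equivalent to $\vb^*\relB\vy$, whence \eqref{eq:Q-dach} yields $\hatQ(\vb^*)=Q(\vy)$. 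Therefore $\hat{\hatQ}(\vy)=Q(\vy)$, as required.

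The main obstacle is not a calculation but rather the organisational one of pinning down the ``swap'' so that each invocation of the general theory for $\hatQ$ is unambiguous; once part~\eqref{thm:Q-dachdach.a} is in hand, part~\eqref{thm:Q-dachdach.b} essentially writes itself.
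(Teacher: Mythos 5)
Your proposal is correct and follows essentially the same route as the paper: the forward implication $\va^*\relB\vx\Rightarrow\vx\relhatB\va^*$ via Proposition~\ref{prop:Q-dach}~\eqref{prop:Q-dach.a} combined with \eqref{eq:relBdach}, the identification of both solution sets as cosets of $\vR$ via Lemma~\ref{lem:linear}, and the coset argument forcing equality, with part~\eqref{thm:Q-dachdach.b} then following at once (the paper states this last step even more tersely than you do). The only cosmetic quibble is that finding some $\vb^*$ with $\vy\relhatB\vb^*$ for a given $\vy\in\vS$ is the swapped version of Lemma~\ref{lem:linear}~\eqref{lem:linear.c} rather than \eqref{lem:linear.d}, but the existence claim is valid either way.
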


\begin{proof}
\eqref{thm:Q-dachdach.a} Let any $\vf^*\in\hatS$ be given. Applying
Lemma~\ref{lem:linear}~\eqref{lem:linear.c} and \eqref{lem:linear.d} to the
relations $\relB$ and $\relhatB$, respectively, yields that there are
$\vs,\vs'\in\vS$ with
\begin{equation}\label{eq:s+R}
    \{ \vx\in\vS \mid \vf^*\relB\vx \} = \vs+\vR
    \mbox{~~and~~}
    \{ \vx\in\vS \mid \vx\relhatB\vf^* \} = \vs'+\vR .
\end{equation}
It remains to establish that the above sets coincide. To this end, we apply
Proposition~\ref{prop:Q-dach}~\eqref{prop:Q-dach.a} to $\vf^*\relB\vs$. This
gives $\hatB(\va^*,\vf^*) = \li\va^*,\vs\re_{\vV}$ for all $\va^*\in\hatS$ or,
in view of \eqref{eq:relBdach}, $\vs\relhatB\vf^*$. Now the second equation in
\eqref {eq:s+R} shows $\vs\in\vs'+\vR$, which in turn yields
$\vs+\vR=\vs'+\vR$.
\par
\eqref{thm:Q-dachdach.b} According to part \eqref{thm:Q-dachdach.a}, the
transition from $\hatQ$ to its dual quadratic form will reproduce $Q$.
\end{proof}

\begin{rem}
The bilinear form $B$ (resp.\ $\hatB$) is non-degenerate precisely when
$\hatS=\vV^*$ (resp.\ $\vS=\vV$). If both $B$ and $\hatB$ are non-degenerate,
then (in the terminology of \cite[p.~23 D\'{e}f.~8]{bour-07a}) each of the bilinear
forms $B$ and $\hatB$ is the \emph{inverse} of the other one. Furthermore,
$D^{-1}=\hat{D}$ and, in accordance with Remark~\ref{rem:bij}, the relation
$\relB$ (resp.\ $\relhatB$) is nothing more than the graph of $D^{-1}$ (resp.\
$\hat{D}^{-1}$).
\end{rem}

\begin{rem}\label{rem:modif2}
Let $\Char \bF\neq 2$. As we recalled in Remark~\ref{rem:modif1}, it is common
here to work with $\frac12 B$ rather than $B$. This suggests to introduce a
relation of being \emph{$\bigl(\frac12 B\bigr)$-linked} by replacing $B$ with
$\frac12 B$ in \eqref{eq:relB} and to modify Theorem~\ref{thm:Q-dach}
accordingly. Since $\va^*\relB\vs$ holds precisely when $\va^*$ is
$\bigl(\frac12 B\bigr)$-linked to $2\vs$, and due to $Q(2\vs)=4Q(\vs)$, in this
way $4\hatQ$ (rather than $\hatQ$) will arise from the initial quadratic form
$Q$. Applying this modified construction to $4\hatQ$ reproduces $Q$. We shall
come across $4\hatQ$ again in Remark~\ref{rem:adj}.
\end{rem}

\section{$Q$ and $\hatQ$ in terms of coordinates}\label{se:koo}
In this section the bridge between $(\vS,Q)$ and $(\hatS,\hatQ)$, as considered
in Theorem~\ref{thm:Q-dach}, will be elucidated using coordinates. We thereby
put $\dim\vV=:n$, $\dim\vS=:m$, $\dim\vR=:d$ and we introduce the (possibly
empty) index sets
\begin{equation}\label{eq:I}
\begin{aligned}
    I  & :=\{1,2,\ldots,n\},       &I_{1}&:=\{1,2,\ldots,d\}, \\
    I_{2}& :=\{d+1,d+2,\ldots,m\}, &I_{3}&:=\{m+1,m+2,\ldots,n\} .
\end{aligned}
\end{equation}
There is at least one basis $\{\ve_{i}\mid i\in I\}$ of $\vV$ satisfying
\begin{equation}\label{eq:basis}
    \vR  = \spn\{\ve_{i}\mid i\in I_{1}\} \mbox{~~and~~}
    \vS  = \spn\{\ve_{i}\mid i\in I_{1}\cup I_{2}\} .
\end{equation}
Let $g_{i}:=Q(\ve_{i})$ and $g_{ij}:=B(\ve_{i},\ve_{j})$ for all $i,j\in
I_1\cup I_2$. Then, for all $i,j\in I_1\cup I_2$, formula \eqref{eq:B} gives
$g_{ij}=B(\ve_i,\ve_j)=B(\ve_j,\ve_i)=g_{ji}$ and, furthermore, \eqref{eq:2Q}
gives $2g_i=2Q(\ve_i)=B(\ve_i,\ve_i)=g_{ii}$. (If $\Char\bF =2$, then the last
equation reduces to $0=g_{ii}$; otherwise, it means $g_i=\frac{1}{2}g_{ii}$.
See Remarks~\ref{rem:char2} and \ref{rem:modif1}.) Due to \eqref{eq:Q-rad=0}
and from $\vR$ being the radical of $B$, we have $g_{i}=g_{ij}=g_{ji}=0$
whenever $ i\in I_{1}$. Consequently, for all $x_{1},x_{2},\ldots,x_{m}\in
\bF$, it follows that
\begin{equation}\label{eq:koo-Q}
    Q\left(\sum_{h=1}^{m} x_{h}\ve_{h}\right)
    = \sum_{i=d+1}^{m} g_{i} x_{i}^2
    + \sum_{i=d+1}^{m-1}
      \sum_{j=i+1}^{m} g_{ij} x_{i}x_{j} .
\end{equation}
We now change over to the dual space $\vV^*$. The initially chosen basis of
$\vV$ determines the dual basis $\{\ve_i^*\mid i\in I\}$ of $\vV^*$. We infer
from \eqref{eq:basis} and \eqref{eq:basis-ann} that
\begin{equation*}
    \hatR=\ann_{\vV}(\vS)=\spn\{\ve_{i}^*\mid i\in I_{3}\}
    \mbox{~~and~~}
    \hatS=\ann_{\vV}(\vR)=\spn\{\ve_{i}^*\mid i \in I_{2}\cup I_{3}\} .
\end{equation*}
Let $\hatg_{i}:=\hatQ(\ve_{i}^*)$ and $\hatg_{ij}:=\hatB(\ve_{i}^*,\ve_{j}^*)$
for all $i,j\in I_{2}\cup I_{3}$. From
Proposition~\ref{prop:Q-dach}~\eqref{prop:Q-dach.b}, we get
$\hatg_{i}=\hatg_{ij}=\hatg_{ji}=0$ whenever $i \in I_{3}$. Consequently, for
all $a_{d+1},a_{d+2},\ldots,a_{n}\in \bF$, it follows that
\begin{equation}\label{eq:koo-Qdach}
    \hatQ\left(\sum_{h=d+1}^{n} a_{h}\ve_{h}^*\right)
    = \sum_{i=d+1}^{m} \hatg_{i} a_{i}^2
    + \sum_{i=d+1}^{m-1}
      \sum_{j=i+1}^{m} \hatg_{ij} a_{i}a_{j} .
\end{equation}
\par
Our goal is to express the coefficients $\hatg_{i}$ and $\hatg_{ij}$ appearing
in \eqref{eq:koo-Qdach} using their analogues from \eqref{eq:koo-Q}. To this
end, we introduce the auxiliary subspaces
\begin{equation*}
    \vT := \spn\{\ve_{k}\mid k\in I_{2}\} \leq \vS
    \mbox{~~and~~}
    \hatT := \spn\{\ve_{k}^*\mid k\in I_{2}\} \leq \hatS
\end{equation*}
together with the symmetric matrices
\begin{equation*}
    G_{22}:=(g_{ij})_{i,j\in I_{2}}
    \mbox{~~and~~}
    \hatG_{22}:=(\hatg_{ij})_{i,j\in I_{2}} .
\end{equation*}
From $\vS=\vR\oplus\vT$ (resp.\ $\hatS=\hatR\oplus\hatT$), each element of
$\vT$ (resp.\ $\hatT$) is contained in a single coset of $\vR$ (resp.\
$\hatR$). Thus Lemma~\ref{lem:linear} implies that the restriction of the
relation $\relB$ to $\hatT\x\vT$ is the graph of a linear bijection
$\tau\colon\hatT\to\vT$. (See also Remark~\ref{rem:bij}.) According to
\eqref{eq:D}, for all $k\in I_{2}$, we have $D(\ve_{k}) = {\sum_{l=d+1}^{m}
g_{kl}N(\ve_{l}^*)}$. By virtue of Lemma~\ref{lem:linear}~\eqref{lem:linear.a},
\begin{equation*}
    \left({\sum_{l=d+1}^{m} g_{kl}\ve_{l}^*}\right)
    \relB
    \ve_{k} = \tau\left({\sum_{l=d+1}^{m} g_{kl}\ve_{l}^*}\right)
    \mbox{~~for all~~} k \in I_{2} .
\end{equation*}
So, in terms of the given bases, $G_{22}$ is the matrix of $\tau^{-1}$.
Likewise, the restriction of $\relhatB$ to $\vT\x\hatT$ is the graph of a
linear bijection $\hat{\tau}\colon\vT\to\hatT$, whose inverse is described by
$\hatG_{22}$. From Theorem~\ref{thm:Q-dachdach}~\eqref{thm:Q-dachdach.a},
$\hat{\tau}^{-1} = \tau$, whence
\begin{equation}\label{eq:G22-inv}
    \hatG_{22} = G_{22}^{-1} .
\end{equation}
Furthermore, \eqref{eq:G22-inv} implies $\ve_{i}^* \relB \tau(\ve_{i}^*) =
{\sum_{k=d+1}^{m}\hatg_{ik}\ve_{k}}$ for all $i\in I_{2}$. Thus, from
\eqref{eq:Q-dach} and \eqref{eq:koo-Q}, we finally arrive at
\begin{equation}\label{eq:g-dach}
    \hatg_{i} = Q\left(\sum_{k=d+1}^{m}\hatg_{ik}\ve_{k}\right) \\
                     = \sum_{k=d+1}^{m}g_{k}\hatg_{ik}^2
                     + \sum_{k=d+1}^{m-1}\sum_{l=k+1}^{m}\hatg_{ik}\hatg_{il} g_{kl}
                     \mbox{~~for all~~}i\in I_{2} .
\end{equation}
This rather cumbersome formula incorporates not only the coefficients appearing
in \eqref{eq:koo-Q} but also the entries of the matrix $G_{22}^{-1}$. In the
following part, the preceding results will be simplified under certain
additional prerequisites. However, we have to distinguish two cases.
\par
\emph{Case}~1. Let $\Char \bF = 2$. Now the restriction of $B$ to $\vT\x\vT$ is
a non-degenerate alternating bilinear form, whence $\dim\vT = \dim\vS-\dim\vR =
m-d$ is even; see Remark~\ref{rem:char2}. There is a choice of the basis
vectors $\ve_{k}$ with $k\in I_{2}$ such that the entries of $G_{22}$ along its
minor diagonal are $1$, whereas all remaining entries equal $0$; see
\cite[p.~81 Cor.~3]{bour-07a}, \cite[Thm.~2.10]{grove-02a},
\cite[Satz~9.8.5]{havl-22b}, \cite[Thm.~11.14]{roman-08a} or
\cite[p.~69]{tayl-92a} (sometimes up to a reordering of the basis vectors).
Thus, with $i,j$ ranging in $I_{2}$, we have
\begin{equation}\label{eq:GM-char2}
    g_{ij}=\left\{\begin{array}{ll}
                    1 & \mbox{if~~} j=d+m+1-i ,\\
                    0 & \mbox{otherwise} .
                  \end{array}
            \right.
\end{equation}
Formula \eqref{eq:koo-Q} turns into
\begin{equation*}
    Q\left(\sum_{h=1}^{m} x_{h}\ve_{h}\right)
    = \sum_{i=d+1}^{m} g_{i} x_{i}^2
    + \sum_{i=d+1}^{(d+m)/2}  x_{i}x_{d+m+1-i} .
\end{equation*}
By virtue of $G_{22}=G_{22}^{-1}$, formula \eqref{eq:G22-inv} now reads
$\hatG_{22}=G_{22}$. Consequently, by taking into account \eqref{eq:GM-char2},
formula \eqref{eq:g-dach} reduces to
\begin{equation*}
    \hatg_{i} = \sum_{k=d+1}^{m}g_{k}g_{ik}^2 +
                \sum_{k=d+1}^{m-1}
                \sum_{l=k+1}^{m}\underbrace{g_{ik} g_{il}}_{=\,0} g_{kl}
              = g_{d+m+1-i}
                \mbox{~~for all~~}i\in I_{2} ,
\end{equation*}
whence \eqref{eq:koo-Qdach} can be rewritten as
\begin{equation*}
    \hatQ\left(\sum_{h=d+1}^{n} a_{h}\ve_{h}^*\right)
    = \sum_{i=d+1}^{m} g_{d+m+1-i} a_{i}^2
    + \sum_{i=d+1}^{(d+m)/2} a_{i}a_{d+m+1-i} .
\end{equation*}
\par
\emph{Case}~2. Let $\Char \bF \neq 2$. Now, according to
Remark~\ref{rem:modif1}, we have $Q(\vx)=\frac{1}{2} B(\vx,\vx)$ for all
$\vx\in\vS$ and $\hatQ(\va^*)=\frac{1}{2}\hatB(\va^*,\va^*)$ for all
$\va^*\in\hatS$. In particular, $Q(\ve_{i}) = g_{i} = \frac{1}{2} g_{ii}$ and
$\hatQ(\ve_{i}) = \hatg_{i} = \frac{1}{2} \hatg_{ii}$ for all $i\in I_{2}$.
Moreover, we are able to avoid the usage of \eqref{eq:g-dach} by replacing
\eqref{eq:koo-Q} with
\begin{equation}\label{eq:koo-Q-1/2}
    Q\left(\sum_{h=1}^{m} x_{h}\ve_{h}\right)
    = \frac{1}{2}
      \sum_{i=d+1}^{m}
      \sum_{j=d+1}^{m} g_{ij} x_{i}x_{j}
\end{equation}
and \eqref{eq:koo-Qdach} with
\begin{equation}\label{eq:koo-Qdach-1/2}
    \hatQ\left(\sum_{h=d+1}^{n} a_{h}\ve_{h}^*\right)
    = \frac{1}{2}
      \sum_{i=d+1}^{m}
      \sum_{j=d+1}^{m} \hatg_{ij} a_{i}a_{j} .
\end{equation}
The relationship among the coefficients $g_{ij}$ and $\hatg_{ij}$ appearing in
the above formulas is governed solely by the matrix equation
\eqref{eq:G22-inv}.
\par
Next, we choose the basis vectors $\ve_{k}$ with $k\in I_{2}$ in such a way
that $G_{22}$ is a diagonal matrix; see \cite[pp.~91--92 Cor.~2]{bour-07a},
\cite[Thm.~4.2]{grove-02a}, \cite[Satz~9.8.10]{havl-22b},
\cite[Thm.~11.21]{roman-08a} or \cite[Thm.~6.10]{shaf+r-13a}. By doing so,
\eqref{eq:koo-Q-1/2} simplifies to
\begin{equation*}
    Q\left(\sum_{h=1}^{m} x_{h}\ve_{h}\right)
    = \frac{1}{2} \sum_{i=d+1}^{m} g_{ii} x_{i}^2 .
\end{equation*}
According to \eqref{eq:G22-inv}, the matrix $\hatG_{22}=G_{22}^{-1}$ now is
also diagonal, whence $\hatg_{ii}=g_{ii}^{-1}$ for all $i\in I_{2}$.
Consequently, \eqref{eq:koo-Qdach-1/2} can be rewritten as
\begin{equation*}
    \hatQ\left(\sum_{h=d+1}^{n} a_{h}\ve_{h}^*\right)
    = \frac{1}{2}\,\sum_{i=d+1}^{m} g_{ii}^{-1} a_{i}^2 .
\end{equation*}

\begin{exa}
Let $\vV$ be a five-dimensional vector space over the field $\bR$ of real
numbers. Also, let $\{\ve_1,\ve_2,\ldots,\ve_5\}$ be a basis of $\vV$ and let
$\vS:=\spn\{\ve_1,\ve_2,\ve_3\}$. In view of \eqref{eq:koo-Q-1/2}, we define a
quadratic form $Q\colon\vS\to\bR$ by
\begin{equation*}
    Q\left(\sum_{h=1}^{3}x_h\ve_h\right) :=
    \frac{1}{2}\left(x_2^2 + 2x_2x_3 +2x_3x_2 + 3 x_3^2\right)
    \mbox{~~for all~~}x_1,x_2,x_3\in\bR .
\end{equation*}
The first row of the $3\x 3$ symmetric matrix
\begin{equation*}
    \big(B(\ve_i,\ve_j)\big)_{i,j\in\{1,2,3\}} =
    \left(\begin{array}{ccc}
           0 & 0 & 0\\
           0 & 1 & 2\\
           0 & 2 & 3
    \end{array}\right)
\end{equation*}
shows that $\ve_1$ belongs to the radical $\vR$ of $B$. The above matrix has
rank two, whence $\vR$ has dimension $3-2=1$. Consequently, $\vR=\spn\{\ve_1\}$
so that \eqref{eq:basis} is satisfied. Therefore, using the terminology and
results of the current section, we now have: $n=5$, $m=3$, $d=1$, $I_1=\{1\}$,
$I_2=\{2,3\}$, $I_3=\{4,5\}$, $\hatR=\spn\{\ve_4^*,\ve_5^*\}$ and
$\hatS=\spn\{\ve_2^*,\ve_3^*,\ve_4^*,\ve_5^*\}$. Furthermore,
\begin{equation*}
    G_{22} = (g_{ij})_{i,j\in I_{2}}
           = \left(\begin{array}{cc}
                    1 & 2\\ 2 & 3
                   \end{array}\right)
    \mbox{~~and~~}
    \hatG_{22}  = (\hatg_{ij})_{i,j\in I_{2}}
                = G_{22}^{-1}
                = \left(\begin{array}{rr}
                        -3 & 2\\ 2 & -1
                        \end{array}\right)
\end{equation*}
according to \eqref{eq:G22-inv}. Thus, from \eqref{eq:koo-Qdach-1/2}, the dual
quadratic form $\hatQ\colon\hatS\to\bR$ satisfies
\begin{equation*}
    \hatQ\left(\sum_{h=2}^{5}a_h\ve_h^*\right) =
    \frac{1}{2}\left(-3a_2^2 + 2a_2a_3 + 2a_3a_2 - a_3^2\right)
    \mbox{~~for all~~} a_2,a_3,a_4,a_5\in\bR .
\end{equation*}
\end{exa}

\begin{rem}\label{rem:quadr}
The results of the preceding paragraphs provide (up to scaling factors in
$\bF\setminus\{0\}$) explicit connections to the literature about quadric loci
and their corresponding quadric envelopes, as quoted in Section~\ref{se:intro}.
The same is true for the work on the homogeneous model of an affine metric
space cited directly beneath. Any such model fits into our approach by choosing
$\vS$ as a hyperplane of $\vV$ and $Q$ as a quadratic form with a
non-degenerate polar form $B$. Then $Q$ satisfies \eqref{eq:Q-rad=0} in a
trivial way. The domain of $\hat{Q}$ turns out to be the entire vector space
$\vV^*$ rather than a proper subspace of $\vV^*$. The polar form $\hatB$ has a
one-dimensional radical.
\end{rem}

\begin{rem}\label{rem:adj}
Let $\Char F\neq 2$. Following Remark~\ref{rem:modif2}, let us exhibit the ties
between $Q$ and $4\hat{Q}$ in a different way. Formulas \eqref{eq:koo-Q-1/2}
and \eqref{eq:koo-Qdach-1/2} are actually based upon the matrices
$\frac{1}{2}G_{22}$ and $\frac{1}{2}\hatG_{22}$, respectively. By analogy to
\eqref{eq:koo-Qdach-1/2}, the quadratic form $4\hatQ$ can be expressed using
the matrix $\frac{1}{2}(4\hatG_{22})$, which in turn coincides with the inverse
of the matrix $\frac{1}{2}G_{22}$ describing $Q$. So, in terms of the specific
coordinates underlying our calculations, $Q$ and $4\hatQ$ come, respectively,
from an \emph{invertible symmetric matrix of size $m\x m$} and from its
\emph{inverse matrix}.
\par
When $d=0$ and $m=n$, then the previous observation is well known from the
literature about the equations of quadrics; see the references in
Section~\ref{se:intro}. In addition, it leads to an alternative construction
which relies two facts. First, the inverse of any invertible $n\x n$ matrix $M$
can be written as $(\det M)^{-1}\adj(M)$, where $\adj(M)$ denotes the
\emph{adjoint} of $M$ (the transpose of the cofactor matrix of $M$). Second, a
quadric of a projective space remains unchanged if its defining quadratic form
is multiplied by a non-zero scalar. Therefore, if a quadric locus is defined
(by analogy to the above) in terms of some symmetric (but not necessarily
invertible) $n\x n$ matrix $M$, then the matrix $\adj(M)$ can be used to define
a quadric envelope; see \cite[pp.~58--74]{klein-68a},
\cite[Sect.~9.3--9.4]{rich-11a}, \cite[pp.~102--118]{semp+k-98a} or
\cite[pp.~262--278]{semp+k-98a}. We refer also to \cite[p.~32]{dolg-12a} for
generalisations in the realm of algebraic geometry.
\end{rem}

\section{Extensions of similarities}\label{se:sim}
We start by recalling a few more intrinsic notions coming from the metric
vector space $(\vS,Q)$. See, among others, \cite{elle-77a},
\cite[p.~40]{elma+k+m-08a}, \cite[Ch.~4]{grove-02a}, \cite[Ch.~12]{grove-02a},
\cite[\S~7.G--H]{schroe-92a} and \cite[p.~55]{tayl-92a}. A mapping $\phi$
$\in\GL(\vS)$ is a \emph{similarity} of $(\vS,Q)$ whenever there exists a
constant $c\in \bF\setminus\{0\}$, known as \emph{ratio} of $\phi$, such that
$Q\bigl(\phi(\vx)\bigr)=c Q(\vx)$ for all $\vx\in \vS$. If $Q$ is not the zero
form, then such a ratio $c$ is determined uniquely by $\phi$. Otherwise, in
order to avoid ambiguity, only $c=1$ will be considered as ratio of $\phi$. All
similarities of $(\vS,Q)$ constitute the \emph{general orthogonal group}
$\GO(\vS,Q)$. An \emph{isometry} of $(\vS,Q)$ is, by definition, a similarity
of ratio $1$. All isometries of $(\vS,Q)$ form the \emph{orthogonal group}
$\Orth(\vS,Q)$. The \emph{weak orthogonal group} $\Oweak(\vS,Q)$ consists of
those isometries of $(\vS,Q)$ which fix the radical $\vR$ elementwise.
\par
Next, we adopt the extrinsic point of view. For each $\phi\in\GL(\vS)$ there
exists at least one $\psi\in\GL(\vV)$ with $\psi(\vx)=\phi(\vx)$ for all
$\vx\in\vS$. Any such $\psi$ will be called an \emph{extension} of $\phi$.
\par
The above definitions and results carry over to $(\hatS,\hatQ)$ and
$\GL(\vV^*)$ in an obvious way. The following theorem establishes a
relationship between the groups $\GO(\vS,Q)$ and $\GO(\hatS,\hatQ)$ by
demonstrating that $\psi\in\GL(\vV)$ is an extension of a similarity with ratio
$c$ of $(\vS,Q)$ if, and only if, its transpose $\phi^\Trans\in\GL(\vV^*)$ is
an extension of a similarity with ratio $c$ of $(\hatS,\hatQ)$.

\begin{thm}\label{thm:psi}
Let $Q\colon\vS\to \bF$ be a quadratic form as in Theorem~\ref{thm:Q-dach} and
let $\hatQ\colon\hatS\to \bF$ be its dual quadratic form. Furthermore, let
$\psi\in\GL(\vV)$ and $c\in \bF\setminus\{0\}$. Then the following statements
are equivalent:
\begin{enumerate}\itemsep0pt
  \item\label{thm:psi.a} $\psi(\vS)=\vS$ and $Q\bigl(\psi(\vx)\bigr) = c
      Q(\vx)$ for all $\vx\in\vS$.
  \item\label{thm:psi.b} $\psi^\Trans(\hatS)=\hatS$ and
      $\hatQ\bigl(\psi^\Trans(\va^*)\bigr) = c \hatQ(\va^*)$ for all
      $\va^*\in\hatS$.
\end{enumerate}
\end{thm}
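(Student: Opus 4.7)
The plan is to prove (a)$\Rightarrow$(b) and then to invoke symmetry. By Proposition~\ref{prop:Q-dach}(b) the pair $(\hatS,\hatQ)$ satisfies the hypotheses of Theorem~\ref{thm:Q-dach}, and by Theorem~\ref{thm:Q-dachdach}(b) the dual of $\hatQ$ equals $Q$. Combined with $(\psi^\Trans)^\Trans=\psi$, the implication (a)$\Rightarrow$(b), applied with $\hatQ$ and $\psi^\Trans$ playing the roles of $Q$ and $\psi$, yields (b)$\Rightarrow$(a).

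To prove (a)$\Rightarrow$(b), I would first establish the invariance of $\hatS$ under $\psi^\Trans$. Polarising the identity $Q\circ\psi|_{\vS}=cQ|_{\vS}$ gives $B(\psi(\vx),\psi(\vy))=cB(\vx,\vy)$ for all $\vx,\vy\in\vS$. Since $c\neq 0$ and $\psi|_{\vS}$ is a bijection of $\vS$ onto itself, this forces $\psi(\vR)=\vR$. Applying \eqref{eq:psi(T)} with $\vT=\vR$ then yields $\psi^\Trans(\hatS)=\psi^\Trans\bigl(\ann_\vV(\vR)\bigr)=\ann_\vV(\vR)=\hatS$.

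The central step is to track $B$-linkage under $\psi^\Trans$. Fix $\va^*\in\hatS$ and, via Lemma~\ref{lem:linear}(c), pick $\vx\in\vS$ with $\va^*\relB\vx$; by \eqref{eq:Q-dach} this gives $\hatQ(\va^*)=Q(\vx)$. For any $\vy\in\vS$ the vector $\psi(\vy)$ again lies in $\vS$, so
\begin{equation*}
\bigl\langle\psi^\Trans(\va^*),\vy\bigr\rangle_\vV=\bigl\langle\va^*,\psi(\vy)\bigr\rangle_\vV=B\bigl(\vx,\psi(\vy)\bigr)=cB\bigl(\psi^{-1}(\vx),\vy\bigr)=B\bigl(c\psi^{-1}(\vx),\vy\bigr),
\end{equation*}
where the third equality uses the polarised similarity relation on the pair $\bigl(\psi^{-1}(\vx),\vy\bigr)\in\vS\x\vS$. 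Hence $\psi^\Trans(\va^*)\relB\bigl(c\psi^{-1}(\vx)\bigr)$, and another appeal to \eqref{eq:Q-dach}, together with $Q(\psi^{-1}(\vx))=c^{-1}Q(\vx)$ (apply the similarity relation to $\vx=\psi(\psi^{-1}(\vx))$), gives
\begin{equation*}
\hatQ\bigl(\psi^\Trans(\va^*)\bigr)=Q\bigl(c\psi^{-1}(\vx)\bigr)=c^2 Q\bigl(\psi^{-1}(\vx)\bigr)=c\,Q(\vx)=c\,\hatQ(\va^*),
\end{equation*}
as required.

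The one point worth flagging is that the central computation genuinely hinges on $\psi^{-1}(\vx)\in\vS$; without that, we could neither polarise nor invert the similarity inside $B$. This is ensured automatically by the first clause of (a), namely $\psi(\vS)=\vS$. Beyond this, there is no substantive obstacle—everything else is direct unwinding of the definitions of $\relB$, $\hatQ$, and $\psi^\Trans$.
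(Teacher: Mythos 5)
Your proof is correct and follows essentially the same route as the paper: polarise to get $\psi(\vR)=\vR$ and hence $\psi^\Trans(\hatS)=\hatS$, track $B$-linkage through $\psi^\Trans$ to reduce $\hatQ(\psi^\Trans(\va^*))$ to a value of $Q$, and obtain the converse by duality via Theorem~\ref{thm:Q-dachdach}~\eqref{thm:Q-dachdach.b}. The only cosmetic difference is that the paper writes the $B$-linked vector as $\psi(\vu)$ and concludes $\psi^\Trans(\va^*)\relB(c\vu)$, whereas you write it as $\vx$ and conclude $\psi^\Trans(\va^*)\relB\bigl(c\psi^{-1}(\vx)\bigr)$ --- the same computation up to the substitution $\vu=\psi^{-1}(\vx)$.
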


\begin{proof}
\eqref{thm:psi.a}~$\Rightarrow$~\eqref{thm:psi.b} By expressing $B$ in terms of
$Q$ as in \eqref{eq:B}, it follows
\begin{equation}\label{eq:B-phi}
    B\bigl(\psi(\vx),\psi(\vy)\bigr) =
    cB(\vx,\vy)
    \mbox{~~for all~~} \vx,\vy\in\vS .
\end{equation}
Consequently, $\vx\in\vR$ is equivalent to $\psi(\vx)\in\vR$, that is,
$\psi(\vR)=\vR$. From \eqref{eq:psi(T)}, applied to $\vR$, and
$\hatS=\ann_{\vV}(\vR)$, we obtain $ \psi^\Trans(\hatS)= \hatS $.
\par
From Lemma~\ref{lem:linear}~\eqref{lem:linear.c} and $\psi(\vS)=\vS$, for each
$\va^*\in \hatS$ there exists at least one $\vu\in \vS$ (depending on $\va^*$)
such that $\va^*\relB\psi(\vu)$. Then, for all $\vy\in\vS$, it follows that
\begin{equation*}
    B( c\vu, \vy )
    = c B(\vu,\vy )
    = B\bigl(\psi(\vu),\psi(\vy)\bigr)
    = \bigl\li\va^*,\psi(\vy)\bigr\re_{\vV}
    = \bigl\li\psi^\Trans(\va^*),\vy\bigr\re_{\vV} ,
\end{equation*}
where we rest on $B$ being linear in its first argument and on
\eqref{eq:B-phi}. So, according to Definition~\ref{defn:relB},
$\psi^\Trans(\va^*)\relB (c\vu)$. Thus we get
\begin{equation*}
    \hat{Q}\bigl(\psi^\Trans(\va^*)\bigr)
    = Q(c\vu)
    = c^2 Q(\vu)
    = c^2\cdot c^{-1}Q\bigl(\psi(\vu)\bigr) = c\hat{Q}(\va^*) .
\end{equation*}
\par
\eqref{thm:psi.b}~$\Rightarrow$~\eqref{thm:psi.a} In view of
Theorem~\ref{thm:Q-dachdach}~\eqref{thm:Q-dachdach.b} and
$(\psi^\Trans)^\Trans=\psi$, the proof of the converse runs in the same manner
by interchanging the roles of $Q$ and $\hat{Q}$.
\end{proof}

Next, we present an example which illustrates not only Theorem~\ref{thm:psi}
but also yet another property of certain $B$-linked elements.

\begin{exa}
Suppose that $\vf^*\in\hatS$ and $\vs\in\vS$ satisfy $\vf^*\relB \vs$ and
$Q(\vs)\neq 0$. Then $\hatQ(\vf^*)=Q(\vs)\neq 0$, so that $\ker\vf^*$ is a
hyperplane of $\vV$. The linear mapping
\begin{equation*}
    \psi_{\vs,\vf^*}\colon\vV\to\vV\colon
    \vx\mapsto \vx - Q(\vs)^{-1}\li\vf^*,\vx\re_{\vV} \vs
\end{equation*}
fixes all vectors in $\ker\vf^*$ and sends $\vs$ to $-\vs$, since
$\vf^*\relB\vs$ entails $\li\vf^*,\vs\re_{\vV} = B(\vs,\vs)=2Q(\vs)$. Using the
last equation, one readily verifies that $\psi_{\vs,\vf^*}^2$ equals the
identity mapping on $\vV$. Consequently, $\psi_{\vs,\vf^*}\in\GL(\vV)$ is a
\emph{dilatation} or a \emph{transvection}; see \cite[p.~7]{grove-02a} and
\cite[p.~20]{tayl-92a}. The transpose of $\psi_{\vs,\vf^*}$ can be written as
\begin{equation*}
    \psi_{\vs,\vf^*}^\Trans\colon\vV^*\to\vV^*
    \colon \va^*\mapsto \va^*-\hatQ(\vf^*)^{-1} \li\va^*,\vs\re_{\vV}\vf^* .
\end{equation*}
The \emph{reflection} of $(\vS,Q)$ along $\vs$ is that isometry
$\phi_{\vs}\in\Oweak(\vS,Q)$ which is given as
\begin{equation*}
    \phi_{\vs}\colon\vS\to\vS\colon \vx\mapsto \vx - Q(\vs)^{-1}B(\vs,\vx)\vs ;
\end{equation*}
see \cite[p.~40]{elma+k+m-08a}, \cite[p.~40]{grove-02a},
\cite[p.~145]{tayl-92a} or \cite[p.~36]{schroe-92a}. Likewise, the reflection
of $(\hatS,\hatQ)$ along $\vf^*$ reads
\begin{equation*}
    \hat{\phi}_{\vf^*}\colon\hatS\to\hatS\colon
    \va^*\mapsto \va^* - \hatQ(\vf^*)^{-1}\hatB(\va^*,\vf^*)\vf^*
\end{equation*}
and belongs to the weak orthogonal group $\Oweak(\hatS,\hatQ)$. From
$\vf^*\relB\vs$, we have $\li\vf^*,\vx\re_{\vV} = B(\vs,\vx)$ for all
$\vx\in\vS$. Hence $\psi_{\vs,\vf^*}$ is an extension of $\phi_{\vs}$.
Theorem~\ref{thm:psi} ensures that $\psi_{\vs,\vf^*}^\Trans$ extends an
isometry of $(\hatS,\hatQ)$. Looking at
Proposition~\ref{prop:Q-dach}~\eqref{prop:Q-dach.a} reveals that this isometry
equals $\hat{\phi}_{\vf^*}$.
\end{exa}

We wish to express the outcome of Theorem~\ref{thm:psi} in matrix form. In
doing so, $\psi\in\GL(\vV)$ is to satisfy condition \eqref{thm:psi.a} for some
$c\in\bF\setminus\{0\}$. Hence condition \eqref{thm:psi.b} holds for
$\psi^\Trans$. We use again the bases of $\vV$ and $\vV^*$, as introduced at
the beginning of Section~\ref{se:koo}, as well as the index sets $I$, $I_{1}$,
$I_{2}$ and $I_{3}$ from \eqref{eq:I}. Let
\begin{equation*}
    p_{ij} := \bigl\li\ve_i^*,\psi(\ve_j)\bigr\re_{\vV}
            = \bigl\li\psi^\Trans(\ve_i^*),\ve_j\bigr\re_{\vV}
    \mbox{~~for all~~} i,j\in I .
\end{equation*}
The matrix $P:=(p_{ij})_{i,j\in I}$ allows for two interpretations: Reading $P$
column-wise, it describes $\psi$ with respect to basis $\{\ve_{j}\mid j\in I\}$
of $\vV$. Reading $P$ row-wise (or the transpose of $P$ column-wise), it
describes $\psi^\Trans$ with respect to the basis $\{\ve_{i}^*\mid i\in I\}$ of
$\vV^*$. For all $r,s\in\{1,2,3\}$, we denote as $P_{rs}$ that submatrix of $P$
which is formed by all $p_{ij}$ with $i$ and $j$ ranging in $I_{r}$ and
$I_{s}$, respectively. Thereby empty matrices may occur. Since $\psi(\vR)=\vR$
and $\psi(\vS)=\vS$, all entries of the submatrices $P_{21}$, $P_{31}$ and
$P_{32}$ are zero. So the matrix $P$ can be written in block form
\begin{equation*}
    P=\left(
      \begin{array}{ccc}
        P_{11} & P_{12} & \multicolumn{1}{|c}{P_{13}} \\
        \cline{2-3}
        0 & \multicolumn{1}{|c}{P_{22}} & \multicolumn{1}{|c}{P_{23}}\\
        \cline{1-2}
        0 & \multicolumn{1}{|c}{0}   &  P_{33}
      \end{array}
      \right) ,
\end{equation*}
where $0$ serves as an abbreviation for a zero-matrix of an appropriate size.
Moreover, two (overlapping) square submatrices are highlighted. The one in the
upper left corner is based upon the index set $I_{1}\cup I_{2}$ and describes
(column-wise) that similarity of $(\vS,Q)$ which is extended by $\psi$. The one
in the lower right corner is based upon the index set $I_{2}\cup I_{3}$ and
describes (row-wise) that similarity of $(\hatS,\hatQ)$ which is extended by
$\psi^\Trans$. The radical $\vR$ of $B$ (resp.\ $\hatR$ of $\hatB$) is fixed
elementwise under $\psi$ (resp.\ $\psi^\Trans$) precisely when $P_{11}$ (resp.\
$P_{33}$) is an identity matrix. Thus, in general, Theorem~\ref{thm:psi}
provides no relationship between the weak orthogonal groups $\Oweak(\vS,Q)$ and
$\Oweak(\hatS,\hatQ)$.

\begin{rem}
A particular case of Theorem~\ref{thm:psi} is omnipresent in literature about
the homogeneous model of an affine-metric space, as quoted in
Section~\ref{se:intro}. Under the premises sketched in Remark~\ref{rem:quadr},
the theorem leads to an isomorphism of the motion group of the affine-metric
space arising from $(\vS,Q)$ and the weak orthogonal group
$\Oweak(\vV^*,\hatQ)$.
\end{rem}


\noindent
Hans Havlicek\\
Institut f\"{u}r Diskrete Mathematik und Geometrie\\
Technische Universit\"{a}t Wien\\
Wiedner Hauptstra{\ss}e 8--10/104\\
1040 Wien\\
Austria\\
\texttt{havlicek@geometrie.tuwien.ac.at}
\end{document}